\newtheorem{remark}{Remark}
\newtheorem{theorem}{Theorem}
\newtheorem{lemma}{Lemma}
\newtheorem{corollary}{Corollary}
\newcommand{\eps}{\varepsilon}
\newcommand{\rr}{\mathbb{R}}
\newcommand{\Hell}{\mathrm{Hell}}
\newcommand{\chem}{\mathrm{chem}}
\newcommand{\KS}{\mathrm{KS}}
\newcommand{\KL}{\mathrm{KL}}
\title{Multiscale convergence of the inverse problem for chemotaxis in the Bayesian setting}
\author{ \href{https://orcid.org/0000-0002-5090-9218}{\includegraphics[scale=0.06]{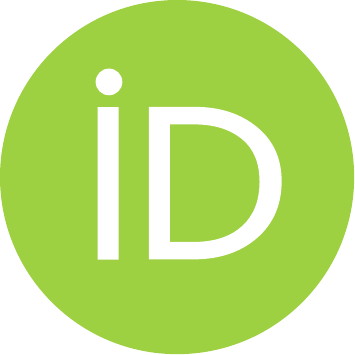}\hspace{1mm}Kathrin Hellmuth}\\
	Department of Mathematics\\
	University of W\"urzburg\\
	97074 Würzburg, Germany\\
	\texttt{kathrin.hellmuth@mathematik.uni-wuerzburg.de}\\
	\And 
	\href{https://orcid.org/0000-0003-2033-8204}{\includegraphics[scale=0.06]{orcid.pdf}\hspace{1mm}Christian Klingenberg} \\
	Department of Mathematics\\
	University of W\"urzburg\\
	97074 Würzburg, Germany\\
	\texttt{klingen@mathematik.uni-wuerzburg.de}\\
	\And 
	\href{https://orcid.org/0000-0001-9210-8948}{\includegraphics[scale=0.06]{orcid.pdf}\hspace{1mm}Qin Li} \\
	Department of Mathematics\\
	University of Wisconsin-Madison\\
	Madison, WI, 53705, USA\\
	\texttt{ qinli@math.wisc.edu}\\
	 \And 
	 \hspace{1mm}Min Tang \\
	 School of Mathematics\\
	 Shanghai Jiaotong University\\
	 Shanghai, 200240, China\\
	 \texttt{ tangmin@sjtu.edu.cn}
}
\begin{document}
\maketitle
 \begin{abstract}
 	Chemotaxis describes the movement of an organism, such as single or multi-cellular organisms and bacteria, in response to a chemical stimulus. Two widely used models to describe the phenomenon are the celebrated Keller-Segel equation and a chemotaxis kinetic equation. These two equations describe the organism movement at the macro- and mesoscopic level respectively, and are asymptotically equivalent in the parabolic regime. How the organism responds to a chemical stimulus is embedded in the diffusion/advection coefficients of the Keller-Segel equation or the turning kernel of the chemotaxis kinetic equation. Experiments are conducted to measure the time dynamics of the organisms' population level movement when reacting to certain stimulation. From this one infers the chemotaxis response, which constitutes an inverse problem. \\ In this paper we discuss the relation between both the macro- and mesoscopic inverse problems, each of which is associated to two different forward models. The discussion is presented in the Bayesian framework, where the posterior distribution of the turning kernel of the organism population is sought after. We prove the asymptotic equivalence of the two posterior distributions.
 \end{abstract}
\keywords{inverse problems; Bayesian approach; kinetic chemotaxis equation; Keller Segel model; multiscale modeling; asymptotic analysis; velocity jump process; mathematical biology}
\section{Introduction}
Chemotaxis is the phenomenon of organisms directing their movements upon certain chemical stimulation. Every motile organism exhibits some type of chemotaxis. Mathematically, there are two main-stream mathematical models used to describe this phenomenon: One at the macroscopic population level and the other at the mesoscopic level.

The most famous model in the first category is the Keller-Segel equation, introduced in~\cite{patlak1953random,KSmodel,KellerSegelTravellingBands}. The equation traces the evolution of bacteria density when chemical stimulation is introduced to the system:
\begin{align}\label{eq:KS}
	&\frac{\partial }{\partial t} \rho - \nabla\cdot (D\cdot \nabla \rho )+ \nabla \cdot  (\rho \Gamma) =0,
\end{align}
where $\rho(x,t)$ is the cell density at location $x$ at time $t>0$. In this equation, both the advection term and the diffusion process integrate the external chemical density information, meaning both the diffusion matrix $D[c] (x,t)$ and the drift vector $\Gamma[c]( x,t)$ depend on the chemoattractant's density function $c$, which serves as a meta parameter  determining the $(x,t)$ dependence.

However, the model is inaccurate in certain regimes. It overlooks the detailed bacteria's reaction to the chemoattractants, and is thus macroscopic in nature. This inspires the second category of modeling, where the motion of  individual bacteria is accounted. The associated modeling is thus mesoscopic. When bacterial movements are composed of two states: running in a straight line with a given velocity $v$ and tumbling from one velocity  $v$ to another $v'$, the according mathematical model is termed the run-and-tumble model. It  is described by the mesoscopic chemotaxis equation~\cite{perthame2006transport,chalub2004kinetic,AltChemotaxis}:
\begin{align}\label{eq:unscalechemotaxis}
	\frac{\partial }{\partial t} f(x,t,v) + v\cdot &\nabla_x f(x,t,v) = \mathcal{K}[c](f) \\
	&:= \int_V K[c](x,t,v,v') f(x,t,v') - K[c](x,t,v',v) f(x,t,v) dv'.\nonumber
\end{align}
In the equation, $f(x,t,v)$ is the population density of bacteria with velocity $v\in V\subset\rr^3$ at space point $x\in \rr^3$ at time $t>0$. The tumbling kernel $ K[c](x,t,v,v' )$  encodes the probability of bacteria changing from velocity $v'$ to $v$. The shape of this probability  depends on the chemoattractant's density function $c$.

Abbreviating the notation and calling $f':= f(x,t,v')$ and $K'[c]:= K[c](x,t,v',v)$ as in \cite{chalub2004kinetic}, the tumbling term on the right-hand side of equation~\eqref{eq:unscalechemotaxis} reads
$$
\mathcal{K}[c](f) = \int_V K[c] f' - K'[c] f dv'\,.
$$
Because bacteria are usually assumed to move 
with constant speed, conventionally we have $V=\mathbb{S}^{n-1}$. Moreover, since the cell doubling time is much longer than the chemotaxis time scale, we remove the birth-death effect from the equation.



Both models above are empirical in nature. The coefficients, such as $D$, $\Gamma$ and $K$ that encode the way bacteria respond to the environment are typically unknown ahead of time. Since the chemoattractant concentration $c$ depends on space and time, so do $D$, $\Gamma$ and $K$. However, except for very few well studied bacteria, these quantities are not explicitly known and cannot be measured directly. One thus needs to design experiments and use measurable quantities to infer the information. This constitutes the inverse problem we study. One such experiment was reported in~\cite{giometto2015generalized} where the authors studied phototaxis and use video recording of the seaweed motion ($\rho$ in time) to infer $D$ and $\Gamma$ in~\eqref{eq:KS}.

There are various ways to conduct inverse problems, and in this paper, we take the viewpoint of Bayesian inference. This is to assume that the coefficients are not uniquely configured in reality, but rather follow a certain probability distribution. The measurements are taken to infer this probability. In the process of such inference, one nevertheless needs to incorporate the forward model. The two different forward models described above then lead to two distinctive posterior distributions as the inference.

One natural question is to understand the relation between the two resulting posterior distributions. In this article, we answer this question by asymptotic analysis. To be specific, we will show that the two models are asymptotically equivalent in the long time and large space regime, and $(D,\Gamma)$ can be uniquely determined by a given $K$. As such, the associated two inverse problems are asymptotically equivalent too. 
The equivalence is characterized by the distance (we use both the Kullback–Leibler divergence and the Hellinger distance) between the two corresponding posterior distributions. We show that this distance vanishes asymptotically as the Knudsen number, a quantity that measures the mean free path between two subsequent tumbles, becomes arbitrarily small.

The rest of the paper is organized as follows: In section \ref{sec:2setup} we present the asymptotic relation between the two forward models. This can be seen as an adaption of the results in \cite{chalub2004kinetic} to our setting. The analysis serves as the foundation to link the two inverse problems. In section~\ref{sec:3setup} we formulate the Bayesian inverse problems corresponding to the scaled chemotaxis equation and the Keller Segel model as underlying models. The well-posedness and convergence of the two corresponding posterior distributions is shown in section~\ref{sec:4PosteriorConv}. The results are summarized and discussed in section \ref{sec:5Summary}.

We should stress that both mathematical modeling of chemotaxis and Bayesian inference are active research areas. In formulating our problems, we select the most widely-accepted models and methods.

For modeling chemotaxis, the two models~\eqref{eq:KS}-\eqref{eq:unscalechemotaxis} are the classical ones, and were derived from the study of a biased random walk~\cite{AltChemotaxis,patlak1953random}. They assume the organisms passively depend on the environment. When bacteria actively respond and change the environment, a parabolic or elliptic equation for $c$ can be added to describe such feedback to the environment~\cite{KSmodel,KellerSegelSlimeMold,KellerSegelTravellingBands}. The coupled system consisting of equation \eqref{eq:KS} and a parabolic equation for $c$, where the chemo-attractant is assumed to be produced by the bacteria population, can exhibit blow-up solutions. Therefore, some particular form of $D[c]$, $\Gamma[c]$ are proposed to eliminate the unwanted behavior. These models include volume filling~\cite{volumefilling}, quorum sensing models~\cite{quorumsensing}, or the flux limited Keller Segel system~\cite{Perthame2018FluxLimitedKS}. On the kinetic level, additional variables were introduced to describe the intracellular responses of the bacteria to the chemoattractant in the signalling pathway~\cite{Othmer2004InternalVar,Si2012InternalVar,MinTang2014InternalVar,Tang2016PathwayBasedMacLimits}. Asymptotic limits of these newer models sometimes reveal interesting phenomenon such as fractional diffusion~\cite{Perthame2017FractionalDiffusion}. The asymptotic equivalence of the classical model to the Keller Segel model was extensively studied e.g. in~\cite{AltChemotaxis,OthmerHillen2Chemo,OthmerAltDispersalModels,chalub2004kinetic}. In particular, the current paper heavily depends on the techniques shown in~\cite{chalub2004kinetic}.

There is also a vast literature on inverse problems. For Bayesian inference perspective in scientific computing, interested readers are referred to monographs~\cite{stuart2010inverse,Stuart2015BayesianInversionHandbookofUQ} and the references therein. In comparison, linking two or multiple inverse problems in different regimes are relatively rare. In \cite{newton2020diffusive}, the authors studied the asymptotic equivalence between the inverse kinetic radiative transport equations and its macroscopic counterpart, the diffusion equation. In  \cite{Abdulle2018BayesianConvParametPDE}, the convergence of Bayesian posterior measures for a parametrized elliptic PDE forward model was shown in a similar fashion.

\section{Asymptotic analysis for kinetic chemotaxis equations and the Keller-Segel model}\label{sec:2setup}
The two problems we will be using are chemotaxis kinetic equation and the Keller-Segel equation. We review these two models in this section and study their relation. It serves as a cornerstone for building the connection of the two associated inverse problems.

Throughout the paper, we assume the chemoattractant density $c$ is one given and fixed function of $(x,t)$ and is \emph{not} produced or consumed by the bacteria. While this is an approximation, it is valid in many experiments where one has tight control over the matrix environment. We drop the dependence of $K, D,\Gamma$ on the fixed $c$ in the notation.


We claim, and will show below that the two equations~\eqref{eq:unscalechemotaxis} and~\eqref{eq:KS} are asymptotically equivalent in the long time large space regime. Denote $\eps$ the scaling parameter, then in a parabolic scaling, the chemotaxis equation to be considered  has the following form:
\begin{equation}\label{eq:chemotaxis}
    \begin{aligned}
	{\eps^2}\frac{\partial }{\partial t} f_{ {\eps}}(x,t,v) +  {\eps} v\cdot &\nabla_x f_{ {\eps}}(x,t,v) = \mathcal{K}_{ {\eps}}(f_{ {\eps}}) \\
	&:= \int_V K_{ {\eps}}(x,t,v,v') f_{ {\eps}}(x,t,v') - K_{ {\eps}}(x,t,v',v) f_{ {\eps}}(x,t,v) dv'\\
	f_{\eps}(x,0,v)&= f_0(x,v).
    \end{aligned}
\end{equation}
Formally, when $\eps\to0$, the tumbling term dominates the equation and we expect, in the leading order:
\[
f_\eps\to f_\ast\,,\quad\text{with}\quad \mathcal{K}_\ast(f_\ast)=0\,,
\]
where $\mathcal{K}_\ast$ can be viewed as the limiting operator as $\mathcal{K}_\eps$. This means the limiting solution is almost in the null space of the limiting tumbling operator. Furthermore, due to the specific form of the tumbling operator, one can show that under certain conditions such null space is one dimensional, compare e.g. \cite{chalub2004kinetic} Lemma 2 and following derivations. We thus  formally write
\[
\mathcal{N}(\mathcal{K}_\ast) = \{\alpha F: \alpha\in\mathbb{R}\,,\text{with}\quad\int_VF d{v} = 1\}\,,
\]
and denote $f_\ast = \rho F$. Conventionally we call $F$ the local equilibrium. Due to the form of $\mathcal{K}$, this is a function only of $v$. Inserting this formula back into~\eqref{eq:chemotaxis} and perform asymptotic expansion up to the second order, and following~\cite{chalub2004kinetic}, we find that $\rho$ satisfies the Keller-Segel equation:
\begin{align}\label{eq:KellerSegel}
	\frac{\partial }{\partial t} \rho - &\nabla\cdot (D\cdot \nabla \rho )+ \nabla \cdot  (\rho \Gamma) =0,\\
	\rho(x,0)&=\rho_0(x)= \int_V f_0(x,v)\, dv.\nonumber
\end{align} 
A rigorous proof of the convergence of a subsequence of $f_{\eps}$ can be found in \cite{chalub2004kinetic}, theorem 3, where the authors discussed a nonlinear extension of the present model. \\
\\
From now on, we confine ourselves to kernels having the form of
\begin{equation}\label{eqn:K_form}
    K_{\eps} = K_0+\eps K_1\,.
\end{equation}
\begin{remark}
Because our aim is to compare the posterior distributions of $K_{\eps}$ for the kinetic model \eqref{eq:chemotaxis} and the macroscopic model \eqref{eq:KellerSegel}, this choice is reasonable. As shown in \cite{chalub2004kinetic}, higher order terms in $\eps$ would not affect the macroscopic equation. Therefore they would not be reconstructable by the macroscopic inverse problem.
\end{remark}
In order to rigorously justify the above intuition on the convergence $f_{\eps}\to \rho F$ and ensure the existence of solutions to  equations \eqref{eq:chemotaxis}, \eqref{eq:KellerSegel}, we suppose $(K_0,K_1)$ to be an element of the admissible set 
\begin{align}\label{eq:admissibleset}
	\mathcal{A}= \{(K_0,K_1)\in	 &&\left(C^1(\mathbb{R}^3\times [0,\infty)\times V\times V)\right)^2\mid
	\|K_0\|_{C^1}, \|K_1\|_{C^1}\leq C  \text{ and }
	\\&&0<\alpha \leq K_0 \text{ symmetric }\text{ and }
	K_1 \text{ antisymmetric in }(v,v')\}\nonumber
\end{align}
for some preset constants $C,\alpha>0$. For any $(K_0,K_1)\in \mathcal{A}$ it is straightforward to show that
\begin{equation}\label{eqn:local_equi}
F\equiv 1/|V|\,,\quad\text{with}\quad |V| := \int_V 1\,dv\,.
\end{equation}

\begin{remark}
 With $(K_0,K_1)$ assumed to be symmetric and antisymmetric, the local equilibrium $F$ in \eqref{eqn:local_equi} is explicit and simple. This is e.g. the case for one typical choice of the tumbling kernel: $K[c,\nabla c]=a[c]+\eps b[c]\phi(v\cdot\nabla c-v'\cdot \nabla c)$ with antisymmetric $\phi$", which represents a special case of the models extensively studied in~\cite{chalub2004kinetic}. For better readability, we assume the symmetry properties of the tumbling kernel  stated in \eqref{eq:admissibleset} throughout the paper. We should mention, however, that it is possible to relax this assumptions on the tumbling kernel while maintaining the same macroscopic limit. In particular, if there exists one uniform velocity distribution $F(v)>0$  that is positive, bounded and satisfies
\begin{displaymath}
	\int_V F\, dv = 1,\hspace{0.5cm} \int_V vF(v) \, dv = 0 \hspace{0.5cm} \text{and} \hspace{0.5cm} 	K_0(x,t,v',v)F(v)= K_0(x,t,v,v')F(v')
\end{displaymath}
for all considered $K_0$ in the admissible set, then all statements and arguments provided in this paper still hold true.
Note that by these requirements, assumption (A0) in Chalub et al.\cite{chalub2004kinetic} is satisfied. 
\end{remark}

Suppose the initial data is smooth in the sense that $f_0 \in C^{1,+}_c(\rr^3\times V)$. ". Then we have the following theorem on convergence which can be viewed as an adaption of the results in \cite{chalub2004kinetic}.

\begin{theorem}\label{thm:fullConv}
Suppose $K_{\eps}$ has the form of~\eqref{eqn:K_form} with $(K_0,K_1)\in \mathcal{A}$ and suppose the initial condition $f_0\in C^{1,+}_c(\rr^3\times V)$, then the solution $f_\eps$ to the chemotaxis equation \eqref{eq:chemotaxis} satisfies the following:
\begin{enumerate}[label = \alph*)]
    \item \label{th:forwardExBound} For sufficiently small $\eps$, the solution $f_{\eps}$ of equation \eqref{eq:chemotaxis}  exists and is bounded in $L^{\infty}\big([0,T], L^1_+\cap L^{\infty}(\rr^3\times V)\big)$ for $T<\infty$.
    \item The solution $f_{\eps}$ converges to $\rho F $ in $L^{\infty}\big([0,T]; L^1_+\cap L^{\infty}(\rr^3\times V)\big)$, where $\rho$ satisfying the Keller-Segel equation \eqref{eq:KellerSegel} with coefficients
    \begin{eqnarray}
	D &=& \int_V v\otimes \kappa(x,t,v) \,dv \label{eq:DByK},\\
	\Gamma &=& -\int_V v\theta(x,t,v) \,dv\label{eq:GammaByK}.
    \end{eqnarray}
    Here $\theta$ and $\kappa$ solve the cell problems:
    \begin{eqnarray*}
    	\mathcal{K}_0(\kappa) = vF\,,\quad \text{and}\quad \mathcal{K}_0(\theta) = \mathcal{K}_1(F),
    \end{eqnarray*}
    where $\mathcal{K}_i(g):= \int_V K_{ {i}} g' - K_{ {i}}' g dv'$ for $i=0,1$.
    \item The boundedness and the convergence is uniform in $\mathcal{A}$.
\end{enumerate}
\end{theorem}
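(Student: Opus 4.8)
The plan is to treat the three assertions in turn, exploiting that the kernel $K_\eps$ does not depend on $f_\eps$ (so \eqref{eq:chemotaxis} is linear) and adapting the quantitative parabolic limit of \cite{chalub2004kinetic} to the explicit form \eqref{eqn:K_form}. For part (a) I would first fix $\eps$ and rewrite \eqref{eq:chemotaxis} as the abstract linear evolution $\partial_t f_\eps = \eps^{-2}\mathcal{K}_\eps(f_\eps) - \eps^{-1}v\cdot\nabla_x f_\eps$. Since $(K_0,K_1)\in\mathcal{A}$ are uniformly bounded, $\mathcal{K}_\eps$ is bounded on $L^1\cap L^\infty(\rr^3\times V)$ and the transport part generates a $C_0$-semigroup, so a Duhamel/fixed-point argument yields a unique mild solution. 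Positivity follows from a comparison argument, since for small $\eps$ one has $K_\eps=K_0+\eps K_1\geq\alpha-\eps\|K_1\|_{C^1}>0$, so the gain term preserves the positive cone; the $L^1$ bound is then immediate from mass conservation $\int_V\mathcal{K}_\eps(f)\,dv=0$. The delicate point is the $\eps$-uniform $L^\infty$ bound, where the singular factor $\eps^{-2}$ rules out a naive Gr\"onwall estimate. Here I would route everything through the dissipativity of the leading operator, $\int_V g\,\mathcal{K}_0(g)\,dv=-\tfrac12\iint K_0(v,v')(g'-g)^2\,dv'dv\leq0$, which together with $K_0\geq\alpha$ yields a spectral gap on the orthogonal complement of $\mathcal{N}(\mathcal{K}_0)=\mathrm{span}\{F\}$; writing $f_\eps=\rho_\eps F+\eps g_\eps$, the gap controls $g_\eps$ and absorbs the singular factor, giving $\|f_\eps\|_{L^\infty}\leq e^{CT}\|f_0\|_{L^\infty}$ with $C$ independent of $\eps$.

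For part (b) I would run the Hilbert expansion $f_\eps=\rho F+\eps f^{(1)}+\eps^2 f^{(2)}+\cdots$. The $\eps^0$ balance forces $\mathcal{K}_0(f^{(0)})=0$, hence $f^{(0)}=\rho F$; the $\eps^1$ balance reads $\mathcal{K}_0(f^{(1)})=(vF)\cdot\nabla_x\rho-\rho\,\mathcal{K}_1(F)$, whose solvability condition holds because $\int_V vF\,dv=0$ and $\mathcal{K}_1$ conserves mass, and solving it with the cell problems $\mathcal{K}_0(\kappa)=vF$, $\mathcal{K}_0(\theta)=\mathcal{K}_1(F)$ gives $f^{(1)}=\kappa\cdot\nabla_x\rho-\rho\theta$. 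Taking the $v$-moment of the $\eps^2$ balance and using that $\mathcal{K}_0$ and $\mathcal{K}_1$ both conserve mass yields exactly \eqref{eq:KellerSegel} with $D=\int_V v\otimes\kappa\,dv$ and $\Gamma=-\int_V v\theta\,dv$ as in \eqref{eq:DByK}--\eqref{eq:GammaByK}, the dissipativity of $\mathcal{K}_0$ guaranteeing the correct elliptic sign of $D$. To turn this formal computation into strong convergence I would use the $\eps$-uniform bounds of part (a) to extract a weak-$*$ limit along a subsequence and identify it as $\rho F$ (this is \cite{chalub2004kinetic}, Theorem 3, specialized to the linear kernel), then estimate the remainder $r_\eps:=f_\eps-(\rho F+\eps f^{(1)})$, which solves a transport-tumbling equation with an $O(\eps^2)$ source, so that the spectral gap forces $\|r_\eps\|\to0$ in $L^\infty([0,T];L^1\cap L^\infty)$. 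Because $D,\Gamma$ are smooth and $D$ uniformly elliptic, \eqref{eq:KellerSegel} has a unique solution, so the limit is subsequence-independent and the whole family $f_\eps$ converges.

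For part (c) I would revisit each estimate and verify that every constant depends only on the preset $C,\alpha$ defining $\mathcal{A}$, on $T$, and on $f_0$, never on the individual kernel: the spectral gap is bounded below uniformly in terms of $\alpha$ and $|V|$, the cell-problem solutions $\kappa,\theta$ and hence $D,\Gamma$ are bounded in $C^1$ uniformly over $\mathcal{A}$, and the well-posedness constants for \eqref{eq:KellerSegel} inherit this uniformity. I expect the main obstacle to be exactly the $\eps$-uniform $L^\infty$ estimate together with the remainder bound: the singular $\eps^{-2}$ scaling forces every step through the coercivity of $\mathcal{K}_0$ rather than through direct bounds on $\mathcal{K}_\eps$, and one must confirm that this coercivity, and therefore all the resulting constants, degrades nowhere as $(K_0,K_1)$ ranges over the admissible set $\mathcal{A}$.
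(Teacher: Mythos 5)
Your proposal takes a genuinely different route from the paper. The paper's part a) works entirely with the mild (Duhamel) formulation along characteristics: $L^1$ conservation by positivity, then dropping the negative loss term, bounding the gain kernel by $2C$, and closing with Gr\"onwall; its part b) proves convergence by showing $\{f_\eps\}$ is a \emph{Cauchy sequence in} $\eps$ (subtracting the equations for $f_\eps$ and $f_{\tilde\eps}$ gives an equation with zero initial data and a source of order $\eps-\tilde\eps$, estimated by the same machinery), and the identification of the limit as $\rho F$ with $\rho$ solving \eqref{eq:KellerSegel} is not re-derived but delegated to \cite{chalub2004kinetic}. You instead run the classical Hilbert expansion with a remainder estimate. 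Your route is more self-contained in one respect: it actually produces the cell problems and the formulas \eqref{eq:DByK}--\eqref{eq:GammaByK}, which the paper omits, and your use of uniqueness for the limiting parabolic problem to upgrade subsequence convergence is clean. You also correctly identify that a naive Gr\"onwall argument is jeopardized by the stiff $\eps^{-2}$ factor --- a difficulty the paper's sketch glosses over.

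There is, however, a genuine gap in your part (a). The uniform $L^\infty$ bound is asserted, not proved, and the mechanism you invoke does not deliver it as stated. The coercivity identity $\int_V g\,\mathcal{K}_0(g)\,dv=-\tfrac12\iint K_0\,(g'-g)^2\,dv'\,dv$ and the resulting spectral gap are $L^2(dv)$ statements; combined with the vanishing of the transport term in the energy estimate they can give a uniform $L^2(\rr^3\times V)$ bound, but they do not control $L^{\infty}(\rr^3\times V)$, which is what the theorem claims and what the downstream Lipschitz estimates (Lemma~\ref{lem:welldefPosteriormeasures}) consume. Moreover, the decomposition $f_\eps=\rho_\eps F+\eps g_\eps$ presupposes that the non-equilibrium part is $O(\eps)$, which is essentially the conclusion; without an a priori bound on $g_\eps$ the argument is circular. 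Bridging $L^2$ coercivity to $L^\infty$ in the presence of the $\eps^{-1}$ transport needs an extra ingredient --- e.g.\ the mild formulation keeping the loss term as exponential damping of rate $\alpha|V|/\eps^2$ together with an independent bound on the macroscopic density, or the dispersive/velocity-averaging estimates that \cite{chalub2004kinetic} rely on. You must either supply this bridge or restate which norm your bound actually holds in.

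A second, smaller gap sits in your part (b) remainder estimate: the theorem allows general $f_0\in C_c^{1,+}(\rr^3\times V)$, not well-prepared data, so $r_\eps(0)=f_0-\rho_0F-\eps f^{(1)}(0)$ is $O(1)$ rather than small, and an equation for $r_\eps$ with an $O(\eps^2)$ source cannot force $r_\eps\to0$ uniformly down to $t=0$; you need an initial-layer corrector (a term relaxing on the time scale $\eps^2$), a well-preparedness hypothesis, or convergence only on $[\tau,T]$, $\tau>0$. (In fairness, the same issue lurks in the paper's own sketch, where the source $S$ in \eqref{eq:fCauchydiffeq} contains $\partial_t f_{\tilde\eps}$, which is of order $\tilde\eps^{-2}$ in the initial layer for ill-prepared data.) Finally, note that with the paper's sign conventions ($\mathcal{K}_0(\kappa)=vF$, $D=\int_V v\otimes\kappa\,dv$), dissipativity of $\mathcal{K}_0$ makes $D$ negative semidefinite, so your parenthetical claim that it ``guarantees the correct elliptic sign'' only holds after flipping a sign in the cell problem or in \eqref{eq:DByK}; your moment computation, done carefully, exposes this inconsistency rather than resolving it.
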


\begin{proof}[Sketch of proof]\hfill
\begin{enumerate}[label = \alph*)]
    \item \label{pr:boundf}
    First of all, we have the maximum principle so that
    \begin{eqnarray}
    \|f_{\eps}(\cdot,t,\cdot)\|_{L^1(\mathbb{R}^3\times V)} &= \|f_0\|_{L^1(\mathbb{R}^3\times V)} <\infty\,,
    \end{eqnarray}
    and following the same arguments as in~\cite{chalub2004kinetic}, we integrate in time for
    \begin{eqnarray}
        &f_{\eps}(x,t,v) &= f_0(x,v)+ \int_0^t \mathcal{K}_{\eps}(f_{\eps}) \left(x-\frac{vs}{\eps},t-s,v\right)\, ds \label{eq:fexplicitimplicitform}\\
        &&\leq f_0(x,v)+ \int_0^t \int_VK_{\eps}\left(x-\frac{vs}{\eps},t-s,v, v'\right)f_{\eps} \left(x-\frac{vs}{\eps},t-s,v'\right)\,dv'\, ds \nonumber\\
        &&\leq f_0(x,v)+ 2C\int_0^t \int_V f_{\eps} \left(x-\frac{vs}{\eps},t-s,v'\right)\,dv'\, ds\,.\nonumber
  	\end{eqnarray}
  	Noting that $f_0\in L^1_+\cap L^{\infty}$ and $0<K_{\eps} = K_0 + \eps K_1 \leq (1+\eps)C\leq 2C$ for  sufficiently small $\eps$, we have:
    \begin{eqnarray}\label{eq:estimate_fLinf}
  	\|f_{\eps}(\cdot,t,\cdot)\|_{L^{\infty}(\mathbb{R}^3\times V)}&  \leq 
  	\|f_{0}\|_{L^{\infty}(\mathbb{R}^3\times V)}  + 2C|V| \int_0^t \|f_{\eps}(\cdot,s,\cdot)\|_{L^{\infty}(\mathbb{R}^3\times V)}\, ds.
  	\end{eqnarray}
Calling the Gr\"onwall lemma one obtains a bound on $\|f_{\eps}(\cdot,t,\cdot)\|_{L^{\infty}(\mathbb{R}^3\times V)}$. Since the only role $K_i$ played is its boundedness by $C$, as in~\eqref{eq:fexplicitimplicitform}, the estimate we get is uniform in $\mathcal{A}$ and is independent of $\eps$ for $\eps$ small enough.
  	
  	\item 
We show that $f_{\eps}$ is a Cauchy sequence in $\eps$. For the purpose, we call $f_{\eps}$ and $f_{\tilde{\eps}}$ the solutions of the chemotaxis equation \eqref{eq:chemotaxis} with the scaling being $\eps$ and $\tilde{\eps}$. We also denote the difference $\hat{f}_{\eps, \tilde{\eps}}:= f_{\eps}-f_{\tilde{\eps}}$. Subtracting the two equations we have:
\begin{alignat}{4}
     &\eps^2 \partial_t\hat{f}_{\eps,\tilde{\eps}} + \eps v\cdot \nabla_x\hat{f}_{\eps,\tilde{\eps}} &=& \mathcal{K}_0(\hat{f}_{\eps, \tilde{\eps}}) + \eps \mathcal{K}_1(\hat{f}_{\eps,\tilde{\eps}}) \nonumber\\
     &&&  -(\eps^2-\tilde{\eps}^2)\partial_t f_{\tilde{\eps}} - (\eps-\tilde{\eps}) v\cdot \nabla_xf_{\tilde{\eps}} + (\eps-\tilde{\eps})\mathcal{K}_1(f_{\tilde{\eps}}) \label{eq:fCauchydiffeq}\\
    &&= & \mathcal{K}_{\eps} (\hat{f}_{\eps,\tilde{\eps}}) \underbrace{-(\eps^2-\tilde{\eps}^2)\partial_t f_{\tilde{\eps}} - (\eps-\tilde{\eps}) v\cdot \nabla_xf_{\tilde{\eps}} + (\eps-\tilde{\eps})\mathcal{K}_1(f_{\tilde{\eps}})}_{=:S}\nonumber
    \end{alignat}
with a trivial initial data $\hat{f}_{\eps,\tilde{\eps}}(x,0,v)=0$. This is an equation with a source term $S$. Using the argument as in~\ref{pr:boundf}, $L^{\infty}$ boundedness of the time and spatial derivative  $\partial_t f_{\tilde{\eps}},\,\nabla_x f_{\tilde{\eps}}$ in $S$ can be shown, meaning $S$ is of order $\eps-\tilde{\eps}$. Running~\eqref{eq:fexplicitimplicitform} again with this extra source term, we have
$$
\|f_{\eps}-f_{\tilde{\eps}}\|_{L^{\infty}([0,T];L^1\cap  L^{\infty}(\rr^3\times V))} =O(\eps -\tilde{\eps})\,.
$$ 
Hence $\{f_\eps\}$ is a Cauchy sequence, and thus converges to some $f\in L^{\infty}([0,T], L^1_+\cap L^{\infty}(\rr^3\times V))$. 


It remains to prove $f= \rho F$ almost everywhere in $[0,T]\times\rr^3\times V$ with $\rho$ satisfying the Keller-Segel equation \eqref{eq:KellerSegel} with $D$, $\Gamma$ as given in equations \eqref{eq:DByK}- \eqref{eq:GammaByK}. This follows by arguments rather similar to those in~\cite{chalub2004kinetic}, and is therefore omitted from here. Since only the boundedness of $(K_0,K_1)$ is seen in the proof, the convergence is uniform in $\mathcal{A}$.

    \end{enumerate}
\end{proof}

\section{Bayesian inverse problem setup}\label{sec:3setup}
Associated with the two forward models, there are two inverse problems. We describe the inverse problem setup and present them with the Bayesian inference formulation.

In the lab setup, it is assumed that the bacteria plate is large enough so that the boundary plays a negligible role. At the initial time, the bacteria cells are distributed on the plate. One then injects chemoattractants onto the plate through a controlled manner, so to have $c(t,x)$ explicitly given, forcing $K_{i}$, and $(D,\Gamma)$ to be functions of $(t,x,v)$ or $(t,x)$ only. The bacteria density at  location $x$ at time $t$ is then measured. 

Measuring is usually done by taking high resolution photos of the plate at time $t$ and counting the bacteria in a small neighbourhood of location $x$. Another possibility is  taking a sample of the bacteria at location $x$ and measuring the bacteria density of the sample by classical  techniques like optical density OD 600 or flow cytometry, see e.g.  \cite{ODMeasurement,FlowCytometry}. This however describes an invasive technique and thus  allows  measurements at only one  time $t$.

The whole experiment is to take data of the following  operator: 
\[
\mathcal{A}^\eps_{K_0\,,K_1}:\; f_0\to \int f_\eps(t,x,v)dv
\]
if the dynamics of the bacteria is modeled by~\eqref{eq:chemotaxis}, and
\[
\mathcal{A}^0_{K_0,K_1}=\mathcal{A}_{D\,,\Gamma}:\; \rho_0:= \int_V f_0dv \to \rho(t,x)
\]
if the dynamics of the bacteria is modeled by~\eqref{eq:KellerSegel}. Noting that $(D,\Gamma)$ are uniquely determined by $(K_0,K_1)$ by equations \eqref{eq:DByK},\eqref{eq:GammaByK}, we can equate $\mathcal{A}_{D\,,\Gamma}$ with $\mathcal{A}^0_{K_0,K_1}$.
Although the more natural macroscopic inverse problem would be to recover the diffusion and drift coefficients $D,\Gamma$  in \eqref{eq:KellerSegel}, we choose to formulate the inverse problem for the tumbling kernel $(K_0,K_1)$.
This allows us to compare the solution for both the kinetic and the macroscopic inverse problem.\\
\begin{remark} \label{rem:whichcompare}
In order to reasonably compare the solutions to the inverse problems, the solutions have to be of the same kind. 
We choose to reconstruct $(K_0,K_1)$ in both the kinetic and macroscopic inverse problem, see Figure \ref{fig:compareinvprob} (left). The macroscopic inverse problem is thus also formulated for $(K_0,K_1)$ which  $(D,\Gamma)$ is a function of. Alternatively one could also reconstruct $(D,\Gamma)$ from both models. In the kinetic setting this would mean to reconstruct $(K_0^{\chem},K_1^{\chem})$ and then transform to values of $(D^{\chem},\Gamma^{\chem})$ by equations \eqref{eq:DByK},\eqref{eq:GammaByK}, see Figure \ref{fig:compareinvprob} (right). \\
We do not choose this alternative, because the information on the tumbling kernel $(K_0,K_1)$ is microscopic and thus more detailed. Furthermore, with a fixed $(K_0,K_1)$, $(D,\Gamma)$ can be uniquely determined, and thus the convergence can be viewed as a mere consequence, see also Remark~\ref{rem:macroPosterior2ways}.
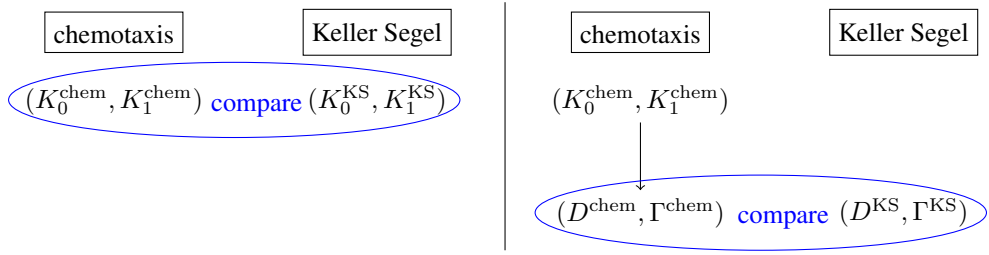
\begin{figure}[h]
   \begin{tikzpicture}
    \node [draw, text height=0.3cm] (1) at (0, 2.9) {chemotaxis};
    \node (2) at (0, 2) {$ (K_0^{\chem},K_1^{\chem})$};
 	\node [draw, text height=0.3cm] (4) at (3.5, 2.9) {Keller Segel};
	\node (5) at (3.5, 2) {$(K_0^{\KS},K_1^{\KS})$};
	\node (12) at (0, 1.7) {};
	\node (13) at (0, 0.3) {};
	\node (15) at (3.5, 1.7) {};
	\node (16) at (3.5, 0.3) {};
	\node (20) at (1.9, 1.9) {\textcolor{blue}{compare}};
	\draw [blue] (1.6,2) ellipse (3cm and 0.5cm); \vspace{3cm}
	
	\node (21) at (5.2,3.3){};
	\node (22) at (5.2,0){};
	\draw (21.center) to (22.center);
	
	\node [draw, text height=0.3cm] (1) at (7, 2.9) {chemotaxis};
    \node (2) at (7, 2) {$ (K_0^{\chem},K_1^{\chem})$};
	\node (3) at (7, 0.5) {$(D^{\chem},\Gamma^{\chem})$};
 	\node [draw, text height=0.3cm] (4) at (10.5, 2.9) {Keller Segel };
	\node (6) at (10.5, 0.5) {$(D^{\KS},\Gamma^{\KS})$};
	\node (12) at (7, 1.7) {};
	\node (13) at (7, 0.8) {};
	\node (15) at (10.5, 1.7) {};
	\node (16) at (10.5, 0.8) {};
	\node (20) at (8.9, 0.4) {\textcolor{blue}{compare}};
	\draw [->](12.center) to (13.center);
	\draw [blue] (8.6,0.5) ellipse (3cm and 0.5cm);
\end{tikzpicture}
\caption{Two ways to compare the inverse problems: determining and comparing the tumbling kernels for both underlying chemotaxis and Keller Segel models (left) or determining the drift or diffusion coefficient for the Keller Segel model and the tumbling kernel for the chemotaxis model and calculating the corresponding drift and diffusion coefficients.}
\label{fig:compareinvprob}
\end{figure}
\end{remark}
Multiple experiments can be conducted using different initial profile, but the same controlled $c(t,x)$ is used to ensure the to-be-reconstructed $K_{i}$ is unchanged from experiment to experiment. Denoting $k\in [1\,,\cdots,K]$ the indices of the different initial data setups, and $j=(j_1,j_2)\in[1\,,\cdots,J_1]\otimes[1\,,\cdots,J_2]$ the indices of the measuring time and location, with $t_j=t_{j_1}$ being the measuring time, and $\chi_{j}=\chi_{j_2}\in C_c(\mathbb{R}^3)$ being the spatial test function, then with~\eqref{eq:chemotaxis} and~\eqref{eq:KellerSegel} being the forward models, we take the measurements, respectively:
\begin{eqnarray}
	\mathcal{G}^{\eps, \chem}_{jk}(K_0,K_1) &=\mathcal{M}_j\left(\mathcal{A}^\eps_{K_0,K_1}(f_{0}^{(k)})\right)&= \int_{\mathbb{R}^3}\int_V f_{\eps}^{(k)}(x,t_j,v)\,dv \, \chi_j(x)dx\,, \label{eq:measureGchemo}\\
	\mathcal{G}^{\KS}_{jk}(K_0,K_1) &=\mathcal{M}_j\left(\mathcal{A}^0_{K_0,K_1}(\rho_0^{(k)})\right)&= \int_{\mathbb{R}^3}\rho^{(k)}(x,t_j)\, \chi_j(x)dx\,,\label{eq:measureGKS}
\end{eqnarray}
where $\mathcal{M}_j$ are the measuring operators with corresponding test functions $(\delta_j,\chi_j)$. One can think of $\chi_j$ a compactly supported blob function concentrated at a certain location, meaning all the bacteria cells in a small neighborhood are counted towards this particular measurement, see Figure~\ref{fig:demonstration_chi}. This is a reasonable model when counting bacteria in a small neighbourhood or taking samples with a pipette.
\begin{figure}[htb]
	\centering
	\includegraphics[width = 6cm]{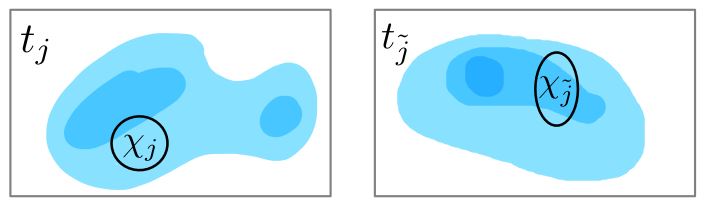}
	\caption{Measurement of the \textcolor{cyan}{bacteria density (blue)} at two different measuring times $t_j, t_{\tilde{j}}$. The location of the test functions is indicated by the support in space of the test functions $\chi_j, \chi_{\tilde{j}}$.}\label{fig:demonstration_chi}
\end{figure}\\
Throughout the paper we assume the initial data and the measuring operators are controlled:
\begin{equation}\label{eqn:bound_test}
\begin{aligned}
&\|f^{(k)}_0\|_{L^1}, \|f^{(k)}_0\|_{L^{\infty}}<C_\rho\,,\quad&&\forall k\\
&\max\{\|\chi_j\|_{L_1}\,,\|\chi_j\|_{L_2}\,,\|\chi_j\|_{L_\infty}, |\text{supp }\chi_j|_{dx}\}<C_x\,,\quad &&\forall j\,.
\end{aligned}
\end{equation}
\begin{remark}
The measurements $\mathcal{G}^{\eps, \chem}_{jk}(K_0,K_1), \mathcal{G}^{\KS}_{jk}(K_0,K_1)$ are formulated in a rather general form in equations \eqref{eq:measureGchemo},\eqref{eq:measureGKS} due to the freedom in the choice of the test function $\chi_j\in C_c(\rr^3)$.\\
However, all subsequent derivations also hold true for the specific case of pointwise measurements with $t_j:= t_{j_1}$ and $x_j := x_{j_2}$. The measurements would then be $\mathcal{G}^{\eps, \chem}_{jk}(K_0,K_1) = \int_V f_{\eps}^{(k)}(x_j,t_j,v)\,dv $ and $\mathcal{G}^{\KS}_{jk}(K_0,K_1) = \rho^{(k)}(x_j,t_j)$, which would correspond to measuring operators $\mathcal{M}_j$ with test functions $(\delta_{t_{j_1}},\delta_{x_{j_2}})$.
\end{remark}

Since measuring error is not avoidable in the measuring process, we assume it introduces additive error and collect the data of the form
\begin{eqnarray*}
	y_{jk}^{\eps, \chem}	&=&\mathcal{G}^{\eps, \chem}_{jk}(K_0,K_1) +\eta_{jk},\\
	y_{jk}^{\KS}&	=&\mathcal{G}^{\KS}_{jk}(K_0,K_1) +\eta_{jk}\,,
\end{eqnarray*}
where the noise $\eta_{jk}$ is assumed to be a random variable independently drawn from a Gaussian distribution $N(0,\gamma^2)$ of known variance $\gamma^2 >0$.

In the Bayesian form, the to-be-reconstructed parameter $(K_0,K_1)$ is assumed to be a random variable, and the goal is to reconstruct its distribution. Suppose a-priori we know that the parameter is drawn from the distribution $\mu_0$, then the Bayesian posterior distributions for $(K_0,K_1)$ should be
\begin{equation}\label{eq:PostChemo}
\begin{aligned}
	\mu^{y}_{\eps, \chem}(K_0,K_1) &= \frac{1}{Z^{\eps,\chem}}\mu^{(K_0,K_1)}_{\eps,\chem}(y)\,\mu_0(K_0,K_1)\\
	&= \frac{1}{Z^{\eps,\chem}}e^{-\frac{1}{2\gamma^2} \|\mathcal{G}^{\eps,\chem}(K_0,K_1) -y \|^2}\,\mu_0(K_0,K_1)\,,
	\end{aligned}
\end{equation}
using~\eqref{eq:chemotaxis} as the forward model, and
\begin{equation}\label{eq:PostKS}
\begin{aligned}
	\mu^{y}_{\KS}(K_0,K_1)& = \frac{1}{Z^{\KS}}\mu^{(K_0,K_1)}_{\KS}(y)\,\mu_0(K_0,K_1)\\
	&= \frac{1}{Z^{\KS}}e^{-\frac{1}{2\gamma^2} \|\mathcal{G}^{\KS}(K_0,K_1) -y \|^2}\,\mu_0(K_0,K_1)\,,
	\end{aligned}
\end{equation}
using~\eqref{eq:KellerSegel} as the forward model. In the formula $Z^{\circ}$ is the normalization constant to ensure $\int 1d\mu^y_\circ(K_0,K_1)=1$ and 
\begin{displaymath}
\mu^{(K_0,K_1)}_{\circ}(y)=e^{-\frac{1}{2\gamma^2} \|\mathcal{G}^{\circ}(K_0,K_1) -y \|^2}
\end{displaymath}
is the likelihood of observing the data $y$ from a model with a tumbling kernel or diffusion and drift term derived by $(K_0,K_1)$.\\
In Section~\ref{sec:4PosteriorConv} we need to specify the conditions on $\mu_0$ to ensure the well-definedness of $\mu^y_\circ$.\\
\begin{remark}\label{rem:macroPosterior2ways}
Since the macroscopic model does not explicitly depend on $(K_0,K_1)$, the distribution of $\mu_{\KS}^y(D,\Gamma)$ is of interest in the macroscopic description \eqref{eq:KellerSegel}. There are two ways to derive it starting with a prior distribution on $(K_0,K_1)$: The natural way would be to transform the prior distribution to a prior on $(D,\Gamma)$ by equations \eqref{eq:DByK}-\eqref{eq:GammaByK} and then consider the inverse problem of reconstructing $(D,\Gamma)$. This approach is displayed by the lower path in Figure \ref{fig:macroPosterior2ways}. If, however, the posterior distribution $\mu_{\KS}^y(K_0,K_1)$ is calculated ahead of the transformation (as in our case), one could instead  transform this posterior distribution directly to a distribution in the $(D,\Gamma)$ space following the upper path in Figure \ref{fig:macroPosterior2ways}. Naturally the question arises whether the two ways lead to the same posterior distribution. It turns out they do. Considering the second possibility, we see that the likelihood and thus the normalization constant only depend on $(D,\Gamma)$, because we are in the macroscopic model. Hence, only the prior distribution is transformed just like it is the case for the first possibility.
\begin{figure}[H]
   \begin{tikzpicture}
    \node (2) at (0, 2) {$\mu_0(K_0,K_1)$};
	\node (3) at (0, 0) {$\mu_0(D,\Gamma)$};
	\node (5) at (5.5, 2) {$\mu_{\KS}^y(K_0,K_1)$};
 	\node (6) at (5.5, 0) {$\mu_{\KS}^y(D,\Gamma)$};
	\node (12) at (0, 1.7) {};
	\node (13) at (0, 0.3) {};
	\node (14) at (0.9,1){transform};
	\node (15) at (5.5, 1.7) {};
	\node (16) at (5.5, 0.3) {};
	\node (23) at (6.4,1){transform};
	\node (17) at (1,2){};
	\node (18) at (4.5,2){};
	\node (19) at (2.75,2.2){inverse problem};
	\node (20) at (1,0){};
	\node (21) at (4.5,0){};
	\node (22) at (2.75,0.2){inverse problem};
	\draw [->](12.center) to (13.center);
	\draw [->](17.center) to (18.center);
 	\draw [->] (15.center) to (16.center);
 	\draw [->] (20.center) to (21.center);
\end{tikzpicture}
\caption{Two ways to determine the posterior distribution $\mu_{\KS}^y(D,\Gamma)$ from a prior $\mu_0(K_0,K_1)$ on the tumbling kernels.}
\label{fig:macroPosterior2ways}
\end{figure}
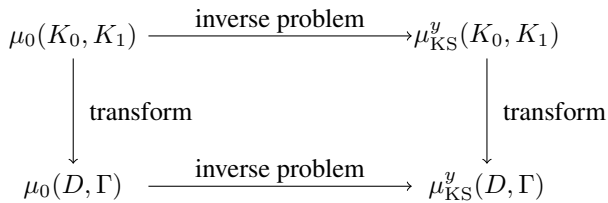
\end{remark}

\section{Convergence of posterior distributions}\label{sec:4PosteriorConv}
One natural question arises: the two different forward models provide two different posterior distribution functions of $(K_0,K_1)$. Which distribution is the correct one, or rather, what is the relation between the two posterior distributions?

As discussed in section~\ref{sec:2setup}, the two forward models are asymptotically equivalent in the long time large space regime, so it is expected that the two posterior distribution converge as well. This suggests the amount of information given by the measurements is equally presented by the two forward models. However, this convergence result is not as straightforward as it may seem. One issue comes from the control of initial data and the measurement operator. For each initial data, the solution converges in $L^{\infty}\big([0,T]; L^1_+\cap L^{\infty}(\rr^3\times V)\big)$, we now have a list of initial data, and the solutions are tested on a set of measuring operators, so we need a uniform convergence when tested on the dual space. Furthermore, to show the convergence of two distribution functions, a certain metric needs to be given on the probability function space, how does the convergence for one set of fixed $(K_0,K_1)$ translates to the convergence on the entire admissible set also needs to be taken care of.

By choosing the admissible set \eqref{eq:admissibleset}, we formulated an assumption on the tumbling kernels $(K_0,K_1)$ ahead of time. With this a priori knowledge we showed the uniform boundedness and convergence of the solutions $f_{\eps}$ to the chemotaxis equation \eqref{eq:chemotaxis} over the function set $\mathcal{A}$ in Theorem \ref{thm:fullConv}. This will play a crucial role in the convergence proof for the inverse problem. From here and on, we assume the prior distribution $\mu_0$ is supported on $\mathcal{A}$.

Before diving in to show the convergence, as an a priori estimate, we first show the well-posedness of the Bayesian posterior distributions in Lemma~\ref{lem:welldefPosteriormeasures}, following~\cite{stuart2010inverse, Stuart2015BayesianInversionHandbookofUQ}.
\begin{lemma}\label{lem:welldefPosteriormeasures}
If the initial conditions $f_0^{(k)}\in C^{1,+}_c(\rr^3\times V)$ and the test functions $\chi_j\in C_c(\rr^3)$ satisfy \eqref{eqn:bound_test} then the following  properties of the posterior distributions hold true:
\begin{enumerate}[label = \alph*)]
    \item The measurements $\mathcal{G}^{\eps,\chem}$ and $\mathcal{G}^{\KS} $ are uniformly bounded  on $\mathcal{A}$ (and uniformly in $\eps$).
    \item For  small enough $\eps$, the measurements $\mathcal{G}^{\eps,\chem}$ and $\mathcal{G}^{\KS} $ are Lipschitz continuous with respect to the tumbling kernels $(K_0,K_1)$ under the norm $\|(K_0,K_1)\|_{*}:= \max(\|K_0\|_{\infty},\|K_1\|_{\infty} )$ on $\mathcal{A}$.
    \item 	The posterior distributions are well-posed and absolutely continuous w.r.t. each other.
\end{enumerate}
\end{lemma}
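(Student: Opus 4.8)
The plan is to establish parts (a) and (b) as the analytic core, using only Theorem~\ref{thm:fullConv} and the a priori controls~\eqref{eqn:bound_test}, and then to feed them into the standard Bayesian well-posedness framework of~\cite{stuart2010inverse,Stuart2015BayesianInversionHandbookofUQ} to obtain (c). For (a) I would bound each scalar entry of the data vector directly: writing $\rho^{(k)}_\eps(x,t)=\int_V f^{(k)}_\eps(x,t,v)\,dv$ and applying H\"older's inequality gives $|\mathcal{G}^{\eps,\chem}_{jk}|\le\|\chi_j\|_{L^\infty}\|\rho^{(k)}_\eps(\cdot,t_j)\|_{L^1}\le\|\chi_j\|_{L^\infty}\|f^{(k)}_\eps(\cdot,t_j,\cdot)\|_{L^1}$. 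The mass identity of Theorem~\ref{thm:fullConv}(a) collapses the last factor to $\|f^{(k)}_0\|_{L^1}$, so by~\eqref{eqn:bound_test} the bound is $C_xC_\rho$, independent of $\eps$, $j$, $k$ and of $(K_0,K_1)\in\mathcal{A}$. Passing to the limit $\eps\to0$, legitimate by the convergence in Theorem~\ref{thm:fullConv}(b), yields the same bound for $\mathcal{G}^{\KS}$.

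The substance lies in (b). For the kinetic map I would set up a stability-in-kernel estimate mirroring the Cauchy-in-$\eps$ argument in the proof of Theorem~\ref{thm:fullConv}. Given $(K_0,K_1),(\tilde K_0,\tilde K_1)\in\mathcal{A}$ with solutions $f_\eps,\tilde f_\eps$, the difference $g:=f_\eps-\tilde f_\eps$ solves~\eqref{eq:chemotaxis} with $\mathcal{K}_\eps$ built from $(K_0,K_1)$, zero initial data, and the extra source
\[
S:=\int_V(K_\eps-\tilde K_\eps)\,\tilde f_\eps'-(K_\eps'-\tilde K_\eps')\,\tilde f_\eps\;dv',\qquad K_\eps-\tilde K_\eps=(K_0-\tilde K_0)+\eps(K_1-\tilde K_1).
\]
Since $S$ contains no derivatives of $\tilde f_\eps$, the uniform $L^1\cap L^\infty$ bound from (a) gives $\|S\|=O(\|(K_0,K_1)-(\tilde K_0,\tilde K_1)\|_*)$. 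Replaying the characteristic representation~\eqref{eq:fexplicitimplicitform} with this source and invoking Gr\"onwall, exactly as for~\eqref{eq:estimate_fLinf}, then produces $\|f_\eps-\tilde f_\eps\|_{L^\infty([0,T];L^1\cap L^\infty)}=O(\|(K_0,K_1)-(\tilde K_0,\tilde K_1)\|_*)$ with a constant depending only on $T,C,\alpha,|V|,C_\rho$, hence independent of $\eps$; integrating against $\chi_j$ as in (a) transfers it to $\mathcal{G}^{\eps,\chem}$.

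For $\mathcal{G}^{\KS}$ I would avoid re-deriving Lipschitz bounds for the cell problems and for the Keller--Segel solve, and instead exploit the convergence of Theorem~\ref{thm:fullConv}: since $\int_VF\,dv=1$ by~\eqref{eqn:local_equi}, we have $\mathcal{G}^{\eps,\chem}(K_0,K_1)\to\mathcal{G}^{\KS}(K_0,K_1)$ for each fixed kernel, so passing to the limit in the Lipschitz inequality for $\mathcal{G}^{\eps,\chem}$, whose constant is uniform in $\eps$, shows $\mathcal{G}^{\KS}$ is Lipschitz with the same constant. Finally (c) is bookkeeping on top of (a)--(b): boundedness confines the misfit $\|\mathcal{G}^\circ-y\|^2$ to a bounded interval, so each likelihood $e^{-\|\mathcal{G}^\circ-y\|^2/(2\gamma^2)}$ lies between a strictly positive constant and $1$; integrating against the probability prior $\mu_0$ gives $0<Z^{\eps,\chem},Z^{\KS}<\infty$, so both posteriors are genuine probability measures. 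Their densities with respect to $\mu_0$ are then bounded above and below by positive constants, whence $\mu^y_{\eps,\chem}$, $\mu^y_{\KS}$ and $\mu_0$ are pairwise equivalent and $d\mu^y_{\eps,\chem}/d\mu^y_{\KS}$, the ratio of likelihoods times $Z^{\KS}/Z^{\eps,\chem}$, is bounded away from $0$ and $\infty$; the same two estimates supply the Hellinger continuity-in-data and well-posedness statements as in~\cite{stuart2010inverse}.

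I expect the kinetic stability-in-kernel estimate to be the main obstacle. The delicate point is keeping its Lipschitz constant uniform in $\eps$: a naive Duhamel iteration carries factors of $\eps^{-2}$, and only the coercivity of $\mathcal{K}_0$ (guaranteed by $\alpha>0$ in~\eqref{eq:admissibleset}) together with the relaxation structure already exploited in Theorem~\ref{thm:fullConv} tames them. Once that uniform constant is secured, the macroscopic Lipschitz property follows by the limiting argument above, and part (c) reduces to the standard estimates.
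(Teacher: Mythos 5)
Your parts (a), the kinetic half of (b), and (c) essentially coincide with the paper's proof: the paper bounds the measurements by $C_xC_\rho$ via mass conservation, derives the same difference equation $\eps^2\partial_t\bar f_\eps+\eps v\cdot\nabla_x\bar f_\eps=\tilde{\mathcal K}_\eps(\bar f_\eps)+\bar{\mathcal K}_\eps(f_\eps)$ (your source $S$ is the paper's $\bar{\mathcal K}_\eps(f_\eps)$ with the roles of the two kernels swapped), integrates along characteristics and applies Gr\"onwall to obtain $\|\bar f_\eps\|_{L^\infty}\le L(T,C,C_\rho)\,\|(K_0-\tilde K_0,K_1-\tilde K_1)\|_*$, and then deduces (c) from boundedness and continuity of the likelihoods. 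Where you genuinely depart from the paper is the macroscopic half of (b). The paper proves Lipschitz continuity of $\mathcal G^{\KS}$ directly: an $L^2$ stability estimate for the Keller--Segel equation in terms of $\|D-\tilde D\|_{L^\infty}+\|\Gamma-\tilde\Gamma\|_{L^\infty}$, then the linear dependence of $(D,\Gamma)$ on the cell-problem solutions $(\kappa,\theta)$ via \eqref{eq:DByK}--\eqref{eq:GammaByK}, and finally a Lax--Milgram argument (using $K_0\ge\alpha>0$) to control $\|\kappa-\tilde\kappa\|+\|\theta-\tilde\theta\|$ by $\|(K_0,K_1)-(\tilde K_0,\tilde K_1)\|_*$. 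You instead pass to the limit $\eps\to0$ in the uniform-in-$\eps$ kinetic Lipschitz estimate, using the convergence $\mathcal G^{\eps,\chem}\to\mathcal G^{\KS}$; this is legitimate and non-circular, since that convergence (the paper's Lemma~\ref{lem:measurementopconv}) rests only on Theorem~\ref{thm:fullConv} and not on the present lemma. Your route is shorter---it dispenses with the Keller--Segel stability estimate and the Lax--Milgram step entirely---but it concentrates all the risk in the single claim that the kinetic Lipschitz constant is uniform in $\eps$: the Duhamel formula for the scaled equation naively carries a factor $\eps^{-2}$, which you rightly flag as the delicate point, whereas the paper's macroscopic argument is completely decoupled from that issue and yields constants depending explicitly on $V,\alpha,C$. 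Since the paper asserts the uniform constant through the same characteristic/Gr\"onwall computation you invoke, your proof is correct exactly conditional on what the paper's own kinetic estimate is conditional on.

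One bookkeeping remark on (c): make explicit that the Lipschitz continuity from (b) is what provides measurability of $(K_0,K_1)\mapsto\exp\left(-\frac{1}{2\gamma^2}\|\mathcal G^{\circ}(K_0,K_1)-y\|^2\right)$ with respect to $\mu_0$, so that $Z^{\circ}$ is a well-defined (and then finite, strictly positive) integral; the paper states this use of part (b) explicitly, while in your write-up it is only implicit in the appeal to the standard framework.
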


\begin{proof}
	\begin{enumerate}[label = \alph*)]
		\item For every $(j,k)$, we have:
		\begin{eqnarray*}
			|\mathcal{G}^{\KS}_{jk}(K_0,K_1)| &=& \left|\int_{\mathbb{R}^3}\rho^{(k)}(x,t_j)\, \chi_j(x)dx\right|\\
			&\leq &\|\chi_j(x)\|_{\infty} \|\rho^{(k)}(\cdot, t_j)\|_{L^1(\mathbb{R}^3)}= \|\chi_j(x)\|_{\infty} \|\rho_0^{(k)}\|_{L^1(\mathbb{R}^3)} \\
			&\leq& C_{x} C_{\rho}
		\end{eqnarray*}
		where we used the density conservation: $\|\rho(\cdot,t)\|_{L^1(\mathbb{R}^3)}= \|\rho_0\|_{L^1(\mathbb{R}^3)}$ for all $t$. Analogously we have $|\mathcal{G}^{\eps, \chem}_{jk}(K_0,K_1)|\leq C_x C_{\rho}$. Note that this bound is independent of $\eps$.
		\item 
		For the chemotaxis model, we have for $(K_0,K_1),(\tilde{K_0},\tilde{K_1})\in \mathcal{A}$
		\begin{align}
			|&\mathcal{G}^{\eps,\chem}_{jk}(K_0,K_1) - \mathcal{G}^{\eps, \chem}_{jk}(\tilde{K_0},\tilde{K_1})|  =\left|\int_{\rr^3}\int_V (f ^{(k)}_{\eps}-\tilde{f} ^{(k)}_{\eps}) (x,t_j,v)\, dv \, \chi_j(x)\, dx\right|\nonumber\\
			& \leq \|\chi_j\|_{\infty}\int_{\text{supp }\chi_j} \int_V |\bar{f} ^{(k)}_{\eps}(x,t_j,v)|\,dv\, dx 
			 \leq C_x|V||\text{supp }\chi_j|_{dx} \|\bar{f} ^{(k)}_{\eps}(\cdot,t_j,\cdot)\|_{L^{\infty}(\rr^3\times V)}\nonumber\\
			 &\leq C_x^2|V| \|\bar{f} ^{(k)}_{\eps}(\cdot,t_j,\cdot)\|_{L^{\infty}(\rr^3\times V)} \label{eq:estContGchem},
		\end{align}
		where $f ^{(k)}_{\eps}$  and $\tilde{f} ^{(k)}_{\eps}$ are  solutions to the initial value problem (\ref{eq:chemotaxis}) with initial condition $f ^{(k)}_0$ and tumbling kernels $K_{\eps} = K_0+\eps K_1$ and $\tilde{K}_{\eps} = \tilde{K}_0+\eps \tilde{K}_1$ respectively. Their difference $\bar{f} ^{(k)}_{\eps}:= f ^{(k)}_{\eps}-\tilde{f} ^{(k)}_{\eps}$ satisfies the scaled  difference  equation:
		\begin{align*}
			{\eps^2}\frac{\partial }{\partial t}\bar{f} ^{(k)}_{\eps}(x,t,v) +  {\eps} v\cdot \nabla_x \bar{f}^{(k)}_{\eps}(x,t,v) &= \tilde{\mathcal{K}}_{ {\eps}}(\bar{f} ^{(k)}_{ {\eps}}) + \bar{\mathcal{K}}_{ {\eps}}(f ^{(k)}_{ {\eps}})\\
			\bar{f} ^{(k)}_{\eps}(x,0,v)&= 0.
		\end{align*}
		Here,  $\bar{\mathcal{K}}$ denotes the tumbling operator with kernel $\bar{K}_{\eps}:=K_{\eps}-\tilde{K}_{\eps}$. Integration in $s$ at $(x-\frac{vs}{\eps},t-s,v)$ shows 
		\begin{align*}
			\bar{f}_{\eps}^{(k)}(x,t,v)&= \int_0^t &&\tilde{\mathcal{K}}_{ {\eps}}(\bar{f}^{(k)}_{ {\eps}}) \left(x-\frac{vs}{\eps},v,t-s\right) + \bar{\mathcal{K}}_{ {\eps}}(f^{(k)}_{ {\eps}})\left(x-\frac{vs}{\eps},v,t-s\right) \, ds\\
			&=\int_0^t &&\int_V\tilde{{K}}_{ {\eps}}\bar{f}'^{(k)}_{ {\eps}} \left(x-\frac{vs}{\eps},v,v',t-s\right) -\tilde{{K}}'_{ {\eps}}\bar{f}^{(k)}_{ {\eps}} \left(x-\frac{vs}{\eps},v,v',t-s\right) \, dv'\\
			&&& + \int_V\bar{{K}}_{ {\eps}}f'^{(k)}_{ {\eps}} \left(x-\frac{vs}{\eps},v,v',t-s\right) -\bar{{K}}'_{ {\eps}}f^{(k)}_{ {\eps}} \left(x-\frac{vs}{\eps},v,v',t-s\right) \, dv'\,ds.
		\end{align*}
		This yields
		\begin{align*}
			\|\bar{f}_{\eps} ^{(k)}(\cdot,t,\cdot)\|_{L^{\infty}(\rr^3\times V)} \leq &2 \|\tilde{K}_{\eps}\|_{\infty} |V|\int_0^t\|\bar{f}_{\eps} ^{(k)}(\cdot, t-s, \cdot)\|_{L^{\infty}(\rr^3\times V)} \, ds \\
			&+ 2 \|K_{\eps}-\tilde{K_{\eps}}\|_{\infty} |V| \|f_{\eps} ^{(k)}\|_{\infty} t\\
			\leq & 4 C |V|\int_0^t\|\bar{f}_{\eps} ^{(k)}(\cdot, s, \cdot)\|_{L^{\infty}(\rr^3\times V)} \, ds \\
			&+ 4 \|(K_0-\tilde{K}_0, K_1-\tilde{K}_1 )\|_{*} |V| c_f T
		\end{align*}
		since one has $\|K_{\eps}\|_{\infty} \leq 2 \|(K_{0},K_1)\|_{*} \leq 2C $ for small enough $\eps< 1$ and $f_{\eps} ^{(k)}\leq c_f $ 
		is bounded in $L^{\infty}$ uniformly on $\mathcal{A}$ by Theorem \ref{thm:fullConv} \ref{th:forwardExBound}. Additionally, $c_f$ can be chosen to be independent of $k$ by inserting the uniform boundedness of $\|f_0^{(k)}\|_{L^{\infty}}$ in \eqref{eqn:bound_test} into equation \eqref{eq:estimate_fLinf}. The Gr\"onwall Lemma thus gives 
		\begin{displaymath}
				\|\bar{f}_{\eps} ^{(k)}(\cdot,t,\cdot)\|_{L^{\infty}(\rr^3\times V)} \leq  L(T,C, C_{\rho})\|(K_0-\tilde{K}_0, K_1-\tilde{K}_1 )\|_{*}
		\end{displaymath}
		with some coefficient $L$ depending on  $T$, $C$ and $C_{\rho}$. Inserting this in equation~\eqref{eq:estContGchem} 
		results in the desired Lipschitz continuity.
		
		We similarly study the Lipschitz continuity of the Keller-Segel measurements $\mathcal{G}^{\KS}_{jk}(K_0,K_1)$. The proof strategy is almost the same. With some computational effort, one can see:
		\begin{eqnarray*}
		        |\mathcal{G}^{\KS}_{jk}(K_0,K_1) &-& \mathcal{G}^{\KS}_{jk}(\tilde{K}_0,\tilde{K}_1)| \leq \|\chi_j\|_{L^2} \|(\rho^{(k)}-\tilde{\rho} ^{(k)})(\cdot, t_j)\|_{L^2}\\
		        &\leq&  C_x c(\|D-\tilde{D}\|_{L^{\infty}([0,T]\times \rr^3; \rr^{3\times3})}+\|\Gamma-\tilde{\Gamma}\|_{L^{\infty}([0,T]\times \rr^3; \rr^{3})}) 
		\end{eqnarray*}
		where $(\Gamma, D), (\tilde{\Gamma},\tilde{D})$ are the drift and diffusion terms derived by the collision operators defined by $(K_0,K_1)$ and $(\tilde{K}_0,\tilde{K}_1)$ respectively by equations \eqref{eq:DByK}-\eqref{eq:GammaByK}. The constant $c$ monotonously depends  on  the $L^2$ norms of $\rho^{(k)}$ and $\nabla_x \rho^{(k)}$ which are bounded uniformly on $\mathcal{A}$. By the linear relation between $D$ and $\kappa$ and $\Gamma$ and $\theta$, this directly translates to 
		\begin{eqnarray*}
		 |\mathcal{G}^{\KS}_{jk}(K_0,K_1) - \mathcal{G}^{\KS}_{jk}(\tilde{K}_0,\tilde{K}_1)| \leq \tilde{c}cC_x \big(&\|\kappa - \tilde{\kappa}\|_{L^{\infty}([0,T]\times\rr^3;L^2(V; \frac{dv}{F}; \rr^{3}))} \\
		 &+ \|\theta - \tilde{\theta}\|_{L^{\infty}([0,T]\times\rr^3;L^2(V; \frac{dv}{F}))}\big),
		\end{eqnarray*}
		with constant $\tilde{c}$ depending only on $V$. Finally, the Lax-Milgram theorem shows the continuous dependence of
		\begin{displaymath}
		       \|\theta - \tilde{\theta}\|_{L^2(V; \frac{dv}{F})}+\|\kappa - \tilde{\kappa}\|_{L^2(V; \frac{dv}{F}; \rr^{3})}\leq \hat{c} \|(K_0,K_1)- (\tilde{K}_0,\tilde{K}_1)\|_{*}
		\end{displaymath}
		 where $\hat{c}$ only depends on $V,\alpha, C$.
		\item By a), the likelihoods $e^{-\frac{1}{2\gamma^2} \|\mathcal{G}^{\circ}(K_0,K_1) -y \|^2}$ are bounded away from zero and bounded  uniformly on $\mathcal{A}$ (and  in $\eps$). Thus, also the normalization constants $Z$ are. Part b) guarantees the measurability of the likelihoods. In total, this shows that the posterior distributions are well-defined and continuous with respect to each other. 	Since the likelihoods are continuous in $y$, well-posedness of the posterior distributions is given.
	\end{enumerate}
\end{proof}

We are now ready to show the convergence of the two posterior measures. There are two quantities we use to measure the difference between two distributions:
\begin{itemize}
\item Kullback-Leibler divergence
\[
d_{\KL}(\mu_1,\mu_2):=\int_{\mathcal{A}}\left(\log\frac{d\mu_1}{d \mu_2}(u)\right)d \mu_2(u)
\]
\item Hellinger metric
\[
d_{\Hell}(\mu_{1}, \mu_{2})^2 = \frac{1}{2}\int_{\mathcal{A}}\left(\sqrt{\frac{d \mu_{1}}{d\mu_{0}}(u)} - \sqrt{\frac{d \mu_{2}}{d\mu_{0}}(u)} \right)^2d\mu_{0}(u).
\]
\end{itemize}
The two metrics both evaluate the distance between the two probability measures $\mu_1$ and $\mu_2$ that are either absolutely continuous with respect to each other or with respect to a third probability measure $\mu_0$. Both are frequently used for comparing two distribution functions e.g. in Machine Learning \cite{KLinML1,KLinML2,KLinML3,HellinML1,HellinML2,HellinML3} or inverse problem settings \cite{newton2020diffusive,Abdulle2018BayesianConvParametPDE}. Even though the Kullback-Leibler divergence lacks the symmetry and triangle-inequality properties of a metric, it gained popularity due to its close connection to several information concepts such as the Shannon entropy or the Fisher information metric  \cite{KLdivergence}. Conversely, the Hellinger metric is a true metric. Although it does not have a demonstrative interpretation as the Kullback-Leibler divergence, its strength lies in the fact that convergence in the Hellinger metric implies convergence of the expectation of any polynomially bounded function with respect to either of the posterior distributions, as explained in \cite{stuart2010inverse}. In particular the mean, covariance and further moments of the distributions converge.

Before comparing the posterior measures, we need to have a look at the convergence of the measurements $\mathcal{G}^{\circ}(K_0,K_1)$.
\begin{lemma}\label{lem:measurementopconv}
	Assuming the initial and testing functions satisfy~\eqref{eqn:bound_test}, the chemotaxis measurements $\mathcal{G}^{\eps, \chem}$ converge to the Keller-Segel measurements $\mathcal{G}^{\KS}$ uniformly on $\mathcal{A}$ as $\eps\to 0$.
\end{lemma}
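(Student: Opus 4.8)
The plan is to reduce the statement directly to the forward convergence already established in Theorem~\ref{thm:fullConv}, so that the bulk of the analytic work is inherited. First I would rewrite the difference of the two measurements as a single integral. Using the explicit local equilibrium $F\equiv 1/|V|$ from~\eqref{eqn:local_equi} one has $\int_V \rho^{(k)} F\,dv = \rho^{(k)}$, so that
\begin{align*}
\mathcal{G}^{\eps,\chem}_{jk}(K_0,K_1) - \mathcal{G}^{\KS}_{jk}(K_0,K_1)
&= \int_{\rr^3}\!\int_V \big(f^{(k)}_\eps - \rho^{(k)} F\big)(x,t_j,v)\,dv\,\chi_j(x)\,dx.
\end{align*}
Writing $g^{(k)}_\eps := f^{(k)}_\eps - \rho^{(k)} F$, the whole task becomes controlling the $v$-integral of $g^{(k)}_\eps$ tested against $\chi_j$.

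Next I would estimate this quantity. Pulling out $\|\chi_j\|_\infty$ and integrating over the compact support of $\chi_j$, the cleanest bound uses the $L^1$-part of the convergence in Theorem~\ref{thm:fullConv}:
\begin{align*}
\big|\mathcal{G}^{\eps,\chem}_{jk} - \mathcal{G}^{\KS}_{jk}\big|
&\leq \|\chi_j\|_\infty \int_{\rr^3}\!\int_V |g^{(k)}_\eps(x,t_j,v)|\,dv\,dx
= \|\chi_j\|_\infty\, \|g^{(k)}_\eps(\cdot,t_j,\cdot)\|_{L^1(\rr^3\times V)}.
\end{align*}
Alternatively one may use the $L^\infty$ norm of $g^{(k)}_\eps$ together with the factors $|\mathrm{supp}\,\chi_j|$ and $|V|$, exactly as in the Lipschitz estimate~\eqref{eq:estContGchem}. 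Either route makes all test-function factors uniformly bounded by $C_x$ via~\eqref{eqn:bound_test}, while the evaluation time $t_j\le T$ is absorbed by the $L^\infty$-in-time norm.

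Finally I would invoke Theorem~\ref{thm:fullConv}~b), which gives $\|f^{(k)}_\eps - \rho^{(k)} F\|_{L^\infty([0,T];L^1\cap L^\infty(\rr^3\times V))}\to 0$ as $\eps\to 0$, \emph{uniformly over $\mathcal{A}$}. Together with the uniform bound $C_\rho$ on $\|f^{(k)}_0\|$ (which, fed into~\eqref{eq:estimate_fLinf}, makes the convergence rate independent of $k$), this forces each $|\mathcal{G}^{\eps,\chem}_{jk}-\mathcal{G}^{\KS}_{jk}|$ to vanish uniformly on $\mathcal{A}$; since the index set $(j,k)$ is finite, taking the maximum over $j,k$ preserves the uniformity and yields the claim. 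In truth there is no serious obstacle here: the substance is carried entirely by Theorem~\ref{thm:fullConv}, and the only point requiring care is bookkeeping of constants --- one must check that the test-function norms, the support size, and above all the convergence rate of $g^{(k)}_\eps$ are all independent of $(K_0,K_1)$ and of $k$, so that the resulting bound is genuinely uniform on $\mathcal{A}$ rather than merely pointwise in the kernel.
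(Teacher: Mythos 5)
Your proposal is correct and follows essentially the same route as the paper: rewrite $\mathcal{G}^{\KS}_{jk}$ as $\int\!\int \rho^{(k)}F\,dv\,\chi_j\,dx$ using $\int_V F\,dv=1$, bound the difference by a H\"older pairing of $f^{(k)}_\eps-\rho^{(k)}F$ against $\chi_j$, and conclude from the uniform-on-$\mathcal{A}$ convergence in Theorem~\ref{thm:fullConv} together with the bounds~\eqref{eqn:bound_test} for uniformity over $(j,k)$. The only (immaterial) difference is that your primary estimate pairs $\|\chi_j\|_\infty$ with the $L^1$ norm of the error, while the paper pairs $|V|\,\|\chi_j\|_{L^1}$ with its $L^\infty$ norm --- both are covered by~\eqref{eqn:bound_test}, and you note the latter variant yourself.
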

\begin{proof}
Theorem \ref{thm:fullConv} shows the convergence of $f_{\eps}$ to $\rho F$ in $L^{\infty}([0,T],L^1_+\cap L^{\infty}(\rr^3\times V))$ uniformly on $\mathcal{A}$. As a consequence, we have the convergence of the measurements:
	\begin{eqnarray*}
		&& \left|\mathcal{G}^{\eps, \chem}_{jk}(K_0,K_1) - \mathcal{G}^{\KS}_{jk}(K_0,K_1)\right|\\
		&=& \left|\int_{\mathbb{R}^3}\int_V f_{\eps}^{(k)}(x,t_j,v)\,dv \, \chi_j(x)dx -	\int_{\mathbb{R}^3} \,\rho^{(k)}(x,t_j) \chi_j(x)dx\right| \\
		&\leq& \int_{\mathbb{R}^3}\int_V |f_{\eps}^{(k)}(x,t_j,v) -\rho^{(k)}(x,t_j)F(v)|\,dv\, |\chi_j(x)|dx \\
		&\leq& \|f_{\eps}^{(k)}(\cdot,t_j,\cdot) -\rho^{(k)}(\cdot,t_j)F\|_{L^{\infty}(\rr^3\times V)}|V| \|\chi_j\|_{L^1(\rr^3)}\\
		&\to& 0
	\end{eqnarray*}
	where we used the form $F=\frac{1}{V}$. By the uniform convergence of $f_\eps$ to $\rho F$, this holds uniformly on $\mathcal{A}$. Since initial data and measuring test functions that satisfy~\eqref{eqn:bound_test} we have the uniform convergence over $(j,k)$ as well.
\end{proof}
We can now proof the following theorem on the asymptotic equivalence of the two posterior measures describing the distribution of the tumbling kernels $(K_0,K_1)\in \mathcal{A}$ if the dynamics of the bacteria is modelled by the kinetic \eqref{eq:chemotaxis} or macroscopic equation \eqref{eq:KellerSegel}.
\begin{theorem}\label{th:inverse}
Let the measurement  of the macroscopic bacteria density  be of the form  \eqref{eq:measureGchemo} and \eqref{eq:measureGKS} for a underlying kinetic chemotaxis model or a Keller Segel model respectively. The measuring test functions $\chi_j\in C_c(\rr^3)$ and initial data $f_0^{(k)} \in C_c^{1,+}(\rr^3\times V)$ are assumed to satisfying \eqref{eqn:bound_test}. Given a prior distribution $\mu_0$ on $\mathcal{A}$ and an additive centered Gaussian noise in the data, the posterior distribution for the tumbling kernel derived from the kinetic chemotaxis equation and the macroscopic Keller Segel equation as underlying models are asymptotically equivalent in the Kullback Leibler divergence 
	\begin{alignat*}{3}
		d_{\KL}(\mu^{y}_{{\eps}, \chem},	\mu^{y}_{\KS})
		\xrightarrow{{\eps}\to 0} 0.
	\end{alignat*}
\end{theorem}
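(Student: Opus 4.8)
The plan is to express both posteriors as densities against the common prior $\mu_0$, reduce the Kullback--Leibler divergence to two explicit scalar pieces, and then kill each piece using the uniform convergence of Lemma~\ref{lem:measurementopconv} together with the uniform bounds of Lemma~\ref{lem:welldefPosteriormeasures}. First I would record the Radon--Nikodym derivatives $\frac{d\mu^{y}_{\eps,\chem}}{d\mu_0}(u)=\frac{1}{Z^{\eps,\chem}}e^{-\frac{1}{2\gamma^2}\|\mathcal{G}^{\eps,\chem}(u)-y\|^2}$ and analogously for $\mu^{y}_{\KS}$, both well defined and mutually absolutely continuous by Lemma~\ref{lem:welldefPosteriormeasures}. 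Dividing, taking the logarithm, and integrating against $\mu^{y}_{\KS}$ (a probability measure) yields
\[
d_{\KL}(\mu^{y}_{\eps,\chem},\mu^{y}_{\KS})=\log\frac{Z^{\KS}}{Z^{\eps,\chem}}-\frac{1}{2\gamma^2}\int_{\mathcal{A}}\Big(\|\mathcal{G}^{\eps,\chem}(u)-y\|^2-\|\mathcal{G}^{\KS}(u)-y\|^2\Big)\,d\mu^{y}_{\KS}(u),
\]
so it suffices to show that each of the two terms tends to $0$ as $\eps\to0$.

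For the integral term I would apply the polarization identity $\|a\|^2-\|b\|^2=\langle a-b,\,a+b\rangle$ with $a=\mathcal{G}^{\eps,\chem}(u)-y$ and $b=\mathcal{G}^{\KS}(u)-y$, which bounds the integrand by $\|\mathcal{G}^{\eps,\chem}(u)-\mathcal{G}^{\KS}(u)\|\,\|\mathcal{G}^{\eps,\chem}(u)+\mathcal{G}^{\KS}(u)-2y\|$. By Lemma~\ref{lem:measurementopconv} the first factor is bounded by $\delta(\eps):=\sup_{u\in\mathcal{A}}\|\mathcal{G}^{\eps,\chem}(u)-\mathcal{G}^{\KS}(u)\|\to0$ uniformly over $u\in\mathcal{A}$, while by Lemma~\ref{lem:welldefPosteriormeasures}a) the measurements are bounded uniformly on $\mathcal{A}$ and in $\eps$, making the second factor bounded by some constant $M$. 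Hence the integrand is dominated by $\delta(\eps)M$, and since $\mu^{y}_{\KS}$ is a probability measure the whole integral is $O(\delta(\eps))\to0$.

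For the normalization term I would show $Z^{\eps,\chem}\to Z^{\KS}$. Using that $x\mapsto e^{-x}$ is $1$-Lipschitz on $[0,\infty)$ together with the same polarization bound gives
\[
|Z^{\eps,\chem}-Z^{\KS}|\le\frac{1}{2\gamma^2}\int_{\mathcal{A}}\big|\|\mathcal{G}^{\eps,\chem}(u)-y\|^2-\|\mathcal{G}^{\KS}(u)-y\|^2\big|\,d\mu_0(u)\le\frac{1}{2\gamma^2}\,\delta(\eps)\,M\to0.
\]
Because the likelihoods are bounded away from zero on $\mathcal{A}$ (Lemma~\ref{lem:welldefPosteriormeasures}c)), both $Z^{\KS}$ and $Z^{\eps,\chem}$ are bounded below by a positive constant; therefore $Z^{\eps,\chem}/Z^{\KS}\to1$ and $\log(Z^{\KS}/Z^{\eps,\chem})\to0$. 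Combining the two estimates gives $d_{\KL}(\mu^{y}_{\eps,\chem},\mu^{y}_{\KS})\to0$.

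The main obstacle is entirely one of uniformity: both the passage of the limit through the integral and the control of the normalization constants rest on the convergence $\mathcal{G}^{\eps,\chem}\to\mathcal{G}^{\KS}$ holding uniformly over the \emph{whole} admissible set $\mathcal{A}$ rather than merely pointwise in $(K_0,K_1)$, which is exactly what the uniform-in-$\mathcal{A}$ statements of Theorem~\ref{thm:fullConv} and Lemma~\ref{lem:measurementopconv} supply. A secondary point requiring care is the asymmetry of $d_{\KL}$: since we integrate against $\mu^{y}_{\KS}$, the logarithm of the normalization ratio does not cancel automatically and must be handled through the strictly positive lower bound on $Z^{\KS}$.
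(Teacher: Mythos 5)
Your proposal is correct and follows essentially the same route as the paper's proof: the same decomposition of the Kullback--Leibler integrand into the normalization-constant ratio and the likelihood ratio, the same polarization bound $\bigl|\|a\|^2-\|b\|^2\bigr|\le\|a-b\|\,\|a+b\|$, and the same appeal to Lemma~\ref{lem:welldefPosteriormeasures}a) for uniform boundedness and Lemma~\ref{lem:measurementopconv} for uniform-in-$\mathcal{A}$ convergence of the measurements. The only (cosmetic) difference is that you expand the logarithm of the Gaussian likelihoods directly and treat the normalization constants explicitly via the Lipschitz bound on $e^{-x}$, where the paper compresses these steps into $\mathcal{O}(\cdot)$ statements.
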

\begin{proof}[Proof of Theorem \ref{th:inverse}]
	With the above Lemmas one can proceed as in the proof in \cite{newton2020diffusive}.
	The integrand of the Kullback-Leibler divergence is by the definition of the normalization constants of order
	\begin{eqnarray*}
		\log\frac{d\mu^{y}_{\eps, \chem}}{d \mu^{y}_{\KS}}(K_0,K_1) &=& \log \left(\frac{\mu_0(K_0,K_1)\mu^{(K_0,K_1)}_{\eps, \chem}(y)}{Z^{\eps,\chem}}\frac{Z^{\KS}}{\mu_0(K_0,K_1)\mu^{(K_0,K_1)}_{\KS}(y)}\right)\\
		&=&\log \frac{Z^{\KS}}{Z^{\eps, \chem}} + \log \frac{\mu^{(K_0,K_1)}_{\eps,\chem}(y)}{\mu^{(K_0,K_1)}_{\KS}(y)} \\
		&=& \mathcal{O}(|Z^{{\eps},\chem}-Z^{\KS}|) + \mathcal{O}(|\mu^{(K_0,K_1)}_{\eps,\chem}(y)-\mu^{(K_0,K_1)}_{\KS}(y)|) \\
		&=& \mathcal{O}(|\mu^{(K_0,K_1)}_{\eps,\chem}(y)-\mu^{(K_0,K_1)}_{\KS}(y)|).
	\end{eqnarray*}\vspace{-0.3cm}
	Thus, we estimate 
	\begin{eqnarray*}
		&&|\mu^{(K_0,K_1)}_{\eps,\chem}(y) - \mu^{(K_0,K_1)}_{\KS}(y)|\\
		&&= \left|\exp\left( -\frac{\|y-\mathcal{G}^{\eps,\chem}(K_0,K_1)\|^2}{2\gamma^2}\right)-\exp\left( -\frac{\|y-\mathcal{G}^{\KS}(K_0,K_1)\|^2}{2\gamma^2}\right)\right|\\
		&\leq& c \left|\|y-\mathcal{G}^{\eps, \chem}(K_0,K_1)\|^2 - \|y-\mathcal{G}^{\KS}(K_0,K_1)\|^2\right|
	\end{eqnarray*}
	for the Lipschitz constant $c < \infty$ of $\exp(-\frac{|x|}{2\gamma^2})$
	and
	\begin{flalign*}
		&\left|\|y-\mathcal{G}^{\eps, \chem}(K_0,K_1)\|^2 - \|y-\mathcal{G}^{\KS}(K_0,K_1)\|^2\right|\\
		&= \left|\text{tr}\left[\left(2y-\mathcal{G}^{\eps, \chem}(K_0,K_1)-\mathcal{G}^{\KS}(K_0,K_1)\right)^T \left(\mathcal{G}^{\eps, \chem}(K_0,K_1)-\mathcal{G}^{\KS}(K_0,K_1)\right)\right]\right|\\
		&\leq \|2y-\mathcal{G}^{\eps, \chem}(K_0,K_1)-\mathcal{G}^{\KS}(K_0,K_1)\|\cdot\|\mathcal{G}^{\eps, \chem}(K_0,K_1)-\mathcal{G}^{\KS}(K_0,K_1)\|.
	\end{flalign*}
	The first factor is bounded uniformly on $\mathcal{A}$ and in $\eps$ by Lemma \ref{lem:welldefPosteriormeasures} a) and  Lemma \ref{lem:measurementopconv} shows that  the second factor converges to $0$  uniformly on $\mathcal{A}$.
	It follows that
	\begin{displaymath}
		d_{\KL}(\mu^{y}_{{\eps}, \chem},	\mu^{y}_{\KS}) \to 0.
	\end{displaymath}
\end{proof}
The boundedness of the  the Hellinger metric by the Kullback Leibler divergence 
\begin{equation*}
    	d_{\Hell}^2(\mu_1,	\mu_2) \leq d_{\KL}(\mu_1,	\mu_2) 
\end{equation*}
 as shown in Lemma 2.4 in \cite{KLHellestimate} together theorem \ref{th:inverse} yield the asymptotic equivalence of the posterior distributions also in the Hellinger metric.
\begin{corollary}
    In the  framework of Theorem \ref{th:inverse}, one has 
	\begin{alignat*}{3}
		d_{\Hell}(\mu^{y}_{{\eps}, \chem},	\mu^{y}_{\KS})
		\xrightarrow{{\eps}\to 0} 0.
	\end{alignat*}
\end{corollary}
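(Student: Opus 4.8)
The plan is not to estimate the Hellinger integral directly, but to piggyback on Theorem \ref{th:inverse} via the standard comparison between the two metrics. The key ingredient is the inequality $d_{\Hell}^2(\mu_1,\mu_2) \leq d_{\KL}(\mu_1,\mu_2)$, which holds whenever $\mu_1$ and $\mu_2$ are mutually absolutely continuous (Lemma 2.4 in \cite{KLHellestimate}). First I would verify that this hypothesis is met: Lemma \ref{lem:welldefPosteriormeasures} c) already establishes that $\mu^{y}_{\eps,\chem}$ and $\mu^{y}_{\KS}$ are absolutely continuous with respect to one another, so the inequality applies to exactly this pair.

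The conclusion is then immediate. Applying the comparison inequality and invoking Theorem \ref{th:inverse} gives
\[
d_{\Hell}^2(\mu^{y}_{\eps,\chem}, \mu^{y}_{\KS}) \leq d_{\KL}(\mu^{y}_{\eps,\chem}, \mu^{y}_{\KS}) \xrightarrow{\eps\to 0} 0,
\]
and since $t\mapsto\sqrt{t}$ is continuous at the origin, taking square roots yields $d_{\Hell}(\mu^{y}_{\eps,\chem}, \mu^{y}_{\KS}) \to 0$ as desired.

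Because all the analytical work has been pushed into Theorem \ref{th:inverse}, I do not expect any genuine obstacle here; the only point to check is the applicability of the comparison inequality, i.e. the mutual absolute continuity supplied by Lemma \ref{lem:welldefPosteriormeasures}. If one instead wanted a proof independent of the $d_{\KL}$ bound, the route would be to expand the Hellinger integral using the Radon--Nikodym densities $d\mu^{y}_{\circ}/d\mu_0 = \mu^{(K_0,K_1)}_{\circ}(y)/Z^{\circ}$, add and subtract a common term so as to decouple the convergence of the likelihoods from that of the normalization constants, and then bound each piece using the uniform convergence $\mathcal{G}^{\eps,\chem}\to\mathcal{G}^{\KS}$ on $\mathcal{A}$ from Lemma \ref{lem:measurementopconv} together with the uniform two-sided bounds on the likelihoods and the constants $Z^{\circ}$ from Lemma \ref{lem:welldefPosteriormeasures}. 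The one extra technical point in that direct approach would be controlling the square roots of the densities, which is precisely why routing through the Kullback--Leibler divergence is the cleaner strategy.
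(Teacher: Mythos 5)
Your proposal is correct and coincides with the paper's own argument: both invoke the bound $d_{\Hell}^2(\mu_1,\mu_2) \leq d_{\KL}(\mu_1,\mu_2)$ from Lemma 2.4 of \cite{KLHellestimate} and then apply Theorem \ref{th:inverse}, with your additional check of mutual absolute continuity via Lemma \ref{lem:welldefPosteriormeasures} c) being a welcome (if implicit in the paper) verification of the inequality's hypothesis.
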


\section{Summary and Discussion}\label{sec:5Summary}

In this article, we considered bacterial movement in an environment with an attracting chemical substance that was not produced or consumed by the bacteria. The bacteria density was modelled to follow a chemotaxis equation~\eqref{eq:chemotaxis} on the kinetic level and a Keller-Segel equation~\eqref{eq:KellerSegel} on the macroscopic level. We studied the reconstruction of the tumbling coefficient using the measurement of the bacteria density at different time and location using different initial data. After adapting the results from~\cite{chalub2004kinetic} in the parabolic scaling, we study the equivalence between the reconstructions using the two different underlying models in the Bayesian framework. 
Assumptions on the prior information were made to guarantee the uniform convergence of the two forward models. This enabled us to show  that the posterior distributions are properly defined and that  convergence of the two posterior distributions holds true. The distance between two posterior distributions was measured in both the Kullback-Leibler divergence and the Hellinger metric.

The work presented here serves as a cornerstone of future research. On one hand, the study here can help design an efficient inversion solver. Most inversion solvers are composed of many iterations of forward solvers. Since kinetic chemotaxis equation lies on the phase space and is numerically much more expensive, the limiting Keller-Segel equation can serve as a good substitute for generating a good initial guess and speeding up the computation. On the other hand, the approach performed in this study is rather general, and with small modification, it also provides the foundation for explaining experiments, such as~\cite{giometto2015generalized}.




\vspace{6pt} 



\subsection*{Funding}
K.H. acknowledges support by the \emph{W\"urzburg Mathematics Center for Communication and Interaction} (WMCCI) as well as the \emph{Studienstiftung des deutschen Volkes} and the \emph{Marianne-Plehn-Programm}.\\
Q.L. acknowledges support from Vilas Early Career award. The research is supported in part by NSF via grant DMS-1750488 and Office of the Vice Chancellor for Research and Graduate Education at the University of Wisconsin Madison with funding from the Wisconsin Alumni Research Foundation.\\
M.T. acknowledge the support by NSFC11871340 and Changjiang Scholar Program-Youth Project.
%

\bibliographystyle{unsrtnat}
\bibliography{lit.bib}

\end{document}